\documentclass[leqno,10pt,a4paper]{amsart}
\usepackage[full]{textcomp}
\usepackage{newpxtext}
\usepackage{cabin} 
\usepackage[varqu,varl]{inconsolata} 
\usepackage[bigdelims,vvarbb]{newpxmath} 
\usepackage[cal=cm,bb=esstix,bbscaled=1.05]{mathalfa} 
\usepackage[margin=2.4cm]{geometry}
\usepackage[usenames,dvipsnames]{color}
\usepackage{cite}
\usepackage{hyperref}
\hypersetup{
 colorlinks,
 linkcolor={red!50!black},
 citecolor={blue!50!black},
 urlcolor={blue!80!black}
}
\usepackage{tikz}
\usepackage{graphicx}
\usepackage{mleftright}
\usepackage{booktabs}
\usepackage{cite}
\usepackage{enumitem}

\setlist[enumerate]{labelsep=*, leftmargin=1.5pc}
\setlist[enumerate]{label=\normalfont(\roman*), ref=\roman*}
\newtheorem{thm}{Theorem}[section]
\newtheorem{lemma}[thm]{Lemma}
\newtheorem{cor}[thm]{Corollary}

\theoremstyle{definition}
\newtheorem{example}[thm]{Example}
\newtheorem{remark}[thm]{Remark}
\newtheorem{definition}[thm]{Definition}

\numberwithin{equation}{section}
\renewcommand{\emptyset}{\varnothing}
\newcommand{\Z}{\mathbb{Z}}
\newcommand{\Q}{\mathbb{Q}}
\newcommand{\C}{\mathbb{C}}
\newcommand{\A}{\mathbb{A}}
\newcommand{\T}{\mathbb{T}}
\newcommand{\NQ}{N_\Q}
\newcommand{\MQ}{M_\Q}
\newcommand{\Proj}{\mathbb{P}}
\newcommand{\abs}[1]{\left\vert{#1}\right\vert}
\newcommand{\dual}[1]{{#1}^*}
\newcommand{\orig}{\textbf{0}}
\newcommand{\bdry}[1]{\partial{#1}}
\newcommand{\intr}[1]{{#1}^\circ}
\newcommand{\Vol}[1]{\operatorname{Vol}\mleft({#1}\mright)}
\newcommand{\V}[1]{\operatorname{vert}\mleft({#1}\mright)}
\renewcommand{\dim}[1]{\operatorname{dim}\mleft({#1}\mright)}
\renewcommand{\gcd}[1]{\operatorname{gcd}\mleft\{{#1}\mright\}}
\newcommand{\lcm}[1]{\operatorname{lcm}\mleft\{{#1}\mright\}}
\newcommand{\blcm}[1]{\operatorname{lcm}\mleft({#1}\mright)}
\renewcommand{\min}[1]{\operatorname{min}\mleft\{{#1}\mright\}}

\newcommand{\conv}[1]{\operatorname{conv}\mleft({#1}\mright)}
\newcommand{\sconv}[1]{\operatorname{conv}\mleft\{{#1}\mright\}}
\newcommand{\scone}[1]{\operatorname{cone}\mleft\{{#1}\mright\}}
\newcommand{\umin}{u_{\operatorname{min}}}
\newcommand{\modb}[1]{\ \ \mleft(\operatorname{mod}\ {#1}\mright)}
\newcommand{\cB}{\mathcal{B}}
\newcommand{\cRB}{\mathcal{RB}}
\newcommand{\cIB}{\mathcal{IB}}
\newcommand{\cT}{\mathcal{T}}
\newcommand{\res}[1]{\operatorname{res}\mleft({#1}\mright)}
\newcommand{\ol}{\overline}
\DeclareMathOperator{\GL}{GL}
\DeclareMathOperator{\mut}{\mu}
\DeclareMathOperator{\Hom}{Hom}
\DeclareMathOperator{\Ehr}{Ehr}

\DeclareMathOperator{\Hilb}{Hilb}
\graphicspath{{images/}}
\begin{document}
\author[A.\,M.\,Kasprzyk]{Alexander~M.~Kasprzyk}
\address{School of Mathematical Sciences\\University of Nottingham\\Nottingham\\NG7 2RD\\UK}
\email{a.m.kasprzyk@nottingham.ac.uk}
\author[B.\,Wormleighton]{Ben~Wormleighton}
\address{Department of Mathematics\\University of California at Berkeley\\Berkeley\\CA\\94720\\USA}
\email{b.wormleighton@berkeley.edu}
\keywords{Quasi-period collapse, mutation, Fano polytope, Ehrhart quasi-polynomial}
\subjclass[2010]{52B20 (Primary); 05E, 14J17 (Secondary)}
\title{Quasi-period collapse for duals to Fano polygons:\\an explanation arising from algebraic geometry}
\maketitle
\begin{abstract}
The Ehrhart quasi-polynomial of a rational polytope $P$ is a fundamental invariant counting lattice points in integer dilates of $P$. The quasi-period of this quasi-polynomial divides the denominator of $P$ but is not always equal to it: this is called quasi-period collapse. Polytopes experiencing quasi-period collapse appear widely across algebra and geometry, and yet the phenomenon remains largely mysterious. By using techniques from algebraic geometry -- $\Q$-Gorenstein deformations of orbifold del~Pezzo surfaces -- we explain quasi-period collapse for rational polygons dual to Fano polygons and describe explicitly the discrepancy between the quasi-period and the denominator.
\end{abstract}
\section{Introduction}\label{sec:intro}
Let $P\subset\Z^d\otimes_\Z\Q$ be a convex lattice polytope of dimension $d$. Let $L_P(k):=\abs{kP\cap\Z^d}$ count the number of lattice points in dilations $kP$ of $P$, $k\in\Z_{\geq 0}$. Ehrhart~\cite{e67} showed that $L_P$ can be written as a degree $d$ polynomial
\[
L_P(k)=c_dk^d+\ldots+c_1k+c_0
\]
which we call the \emph{Ehrhart polynomial} of $P$. The leading coefficient $c_d$ is given by $\Vol{P}/d!$, $c_{d-1}$ is equal to $\Vol{\bdry P}/2(d-1)!$, and $c_0=1$. Here $\Vol{\,\cdot\,}$ denotes the normalised volume, and $\bdry P$ denotes the boundary of $P$. For example, if $P$ is two-dimensional (that is, $P$ is a lattice \emph{polygon}) we obtain
\[
L_P(k)=\frac{\Vol{P}}{2}k^2+\frac{\abs{\bdry P\cap\Z^2}}{2}k+1.
\]
Setting $k=1$ in this expression recovers Pick's Theorem~\cite{Pick}. The values of the Ehrhart polynomial of $P$ form a generating function $\Ehr_P(t):=\sum_{k\geq0}L_P(k)t^k$ called the \emph{Ehrhart series} of $P$.

When the vertices of $P$ are rational points the situation is more interesting. Recall that a \emph{quasi-polynomial} with \emph{period} $s\in\Z_{>0}$ is a function $q:\Z\rightarrow\Q$ defined by polynomials $q_0,q_1,\ldots,q_{s-1}$ such that
\[
q(k)=q_i(k)\qquad\text{when }k\equiv i\modb{s}.
\]
The \emph{degree} of $q$ is the largest degree of the $q_i$. The minimum period of $q$ is called the \emph{quasi-period}, and necessarily divides any other period $s$. Ehrhart showed that $L_P$ is given by a quasi-polynomial of degree $d$, which we call the \emph{Ehrhart quasi-polynomial} of $P$. Let $\pi_P$ denote the quasi-period of $P$. The smallest positive integer $r_P\in\Z_{>0}$ such that $r_PP$ is a lattice polytope is called the \emph{denominator} of $P$. It is certainly the case that $L_P$ is $r_P$-periodic, however it is perhaps surprising that the quasi-period of $L_P$ does not always equal $r_P$; this phenomenon is called \emph{quasi-period collapse}.

\begin{example}[Quasi-period collapse]\label{eg:quasi_period_collapse_dual_P114}
Consider the triangle $P:=\sconv{(5, -1),(-1, -1),(-1, 1/2)}$ with denominator $r_P=2$. This has $L_P(k)=9/2k^2 + 9/2k + 1$, hence $\pi_P=1$.
\end{example}

Quasi-period collapse is poorly understood, although it occurs in many contexts. For example, de~Loera--McAllister~\cite{dlm04,dlm06} consider polytopes arising naturally in the study of Lie algebras (the Gel'fand--Tsetlin polytopes and the polytopes determined by the Clebsch--Gordan coefficients) that exhibit quasi-period collapse. In dimension two McAllister--Woods~\cite{mw05} show that there exist rational polygons with $r_P$ arbitrarily large but with $\pi_P=1$ (see also Example~\ref{eg:arbitrarily_large_denominator}). Haase--McAllister~\cite{hm08} give a constructive view of this phenomena in terms of \emph{$\GL_d(\Z)$-scissor congruence}; here a polytope is partitioned into pieces that are individually modified via $\GL_d(\Z)$ transformation and lattice translation, then reassembled to give a new polytope which (by construction) has equal Ehrhart quasi-polynomial but different $r_P$. 

\begin{example}[$\GL_2(\Z)$-scissor congruence]\label{eg:scissor_congruence_dual_P114}
The lattice triangle $Q:=\sconv{(2, -1),(-1, -1),(-1, 2)}$ with Ehrhart polynomial $L_Q(k)=9/2k^2+9/2k+1$ can be partitioned into two rational triangles as depicted on the left below. Fix the bottom-most triangle, and transform the top-most triangle via the lattice automorphism $e_1\mapsto (3, -1)$, $e_2\mapsto (4,-1)$. This gives the rational triangle $P$ (depicted on the right) from Example~\ref{eg:quasi_period_collapse_dual_P114}.
\begin{center}
\includegraphics[scale=0.8]{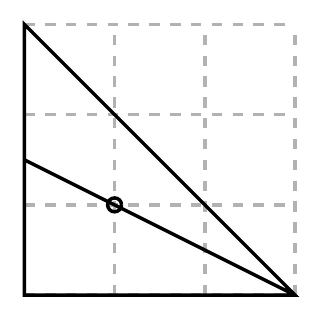}
\raisebox{24px}{$\longmapsto$}
\includegraphics[scale=0.8]{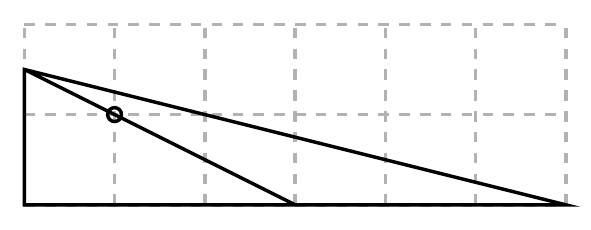}
\end{center}
\end{example}

We give an explanation for quasi-period collapse in two dimensions for a certain class of polygons in terms of recent results in algebraic geometry arising from Mirror Symmetry. In~\S\ref{sec:mutation} we explain how \emph{mutation} -- a combinatorial operation arising from the theory of cluster algebras -- gives an explanation of this phenomenon, and explain how this is related to \emph{$\Q$-Gorenstein (qG-) deformations} of del~Pezzo surfaces as studied by Wahl~\cite{Wah80}, Koll\'ar--Shepherd-Barron~\cite{ksb}, Hacking--Prokhorov~\cite{hp}, and others. Finally, in Corollary~\ref{cor:disc} we completely characterise the discrepancy between the denominator and the quasi-period for this class of polygons.

\section{Mutation}\label{sec:mutation}
In~\cite{hm08} Haase--McAllister propose the open problem of finding a systematic and useful technique that implements $\GL_d(\Z)$-scissor congruence for rational polytopes. In the case when the dual polyhedron is a lattice polytope it was observed in~\cite{acgk12} that one such technique is given by \emph{mutation}.

\subsection{The combinatorics of mutation}\label{subsec:mutation_general}
Let $N\cong\Z^d$ be a rank $d$ lattice and set $\NQ:=N\otimes_\Z\Q$. Let $P\subset\NQ$ be a lattice polytope. We require -- and will assume from here onwards -- that $P$ satisfies the following two conditions:
\begin{enumerate}[label=(\alph*), ref=\alph*]
\item\label{item:fano_max_dim}
$P$ is of maximum dimension in $N$, $\dim{P}=d$;
\item\label{item:fano_origin}
the origin is contained in the strict interior of $P$, $\orig\in\intr{P}$.
\end{enumerate}
Condition~\eqref{item:fano_origin} is not especially stringent, and can be satisfied by any polytope with $\intr{P}\cap N\neq\emptyset$ by lattice translation. It is, however, an essential requirement in what follows.

Let  $M:=\Hom(N,\Z)\cong\Z^d$ denote the dual lattice. Given a polytope $P\subset\NQ$, the dual polyhedron is defined by
\[
\dual{P}:=\{u\in\MQ\mid u(v)\geq -1\text{ for all }v\in P\}\subset\MQ.
\]
Condition~\eqref{item:fano_origin} gives that $\dual{P}$ is a (typically rational) polytope. It is on rational polytopes dual to lattice polytopes that we focus. In this section we will explain how mutation corresponds to a piecewise-$\GL_d(\Z)$ transformation of $\dual{P}$, and hence is an instance of $\GL_d(\Z)$-scissor congruence for $\dual{P}$.

Following~\cite[\S3]{acgk12}, let $w\in M$ be a primitive lattice vector. Then $w:N\rightarrow\Z$ determines a height function (or grading) which naturally extends to $\NQ\rightarrow\Q$. We call $w(v)$ the \emph{height} of $v\in\NQ$. We denote the set of all points of height $h$ by $H_{w,h}$, and write
\[
w_h(P):=\conv{H_{w,h}\cap P\cap N}\subset\NQ
\]
for the (possibly empty) convex hull of lattice points in $P$ at height $h$. 

\begin{definition}\label{defn:factor}
A \emph{factor} of $P\subset\NQ$ with respect to $w\in M$ is a lattice polytope $F\subset w^\perp$ such that for every negative integer $h\in\Z_{<0}$ there exists a (possibly empty) lattice polytope $R_h\subset\NQ$ such that
\[
H_{w,h}\cap\V{P}\subseteq R_h+\abs{h}F\subseteq w_h(P).
\]
Here `$+$' denotes Minkowski sum, and we define $\emptyset+Q=\emptyset$ for every lattice polytope $Q$.
\end{definition}

\begin{definition}\label{defn:mutation}
Let $P\subset\NQ$ be a lattice polytope with $w\in M$ and $F\subset\NQ$ as above. The \emph{mutation} of $P$ with respect to the data $(w,F)$ is the lattice polytope
\[
\mut_{(w,F)}(P):=\conv{\bigcup_{h\in\Z_{<0}}\!R_h\cup\bigcup_{h\in\Z_{\geq 0}}\!\left(w_h(P)+hF\right)}\subset\NQ.
\]
\end{definition}
It is shown in~\cite[Proposition~1]{acgk12} that, for fixed data $(w,F)$, any choice of $\{R_h\}$ satisfying Definition~\ref{defn:factor} gives $\GL_d(\Z)$-equivalent mutations. Since we regard lattice polytopes as being defined only up to $\GL_d(\Z)$-equivalence, this means that mutation is well-defined. One can readily see that translating the factor $F$ by some lattice point $v\in w^\perp\cap N$ gives isomorphic mutations: $\mut_{(w,F+v)}(P)\cong\mut_{(w,F)}(P)$. In particular if $\dim{F}=0$ then $\mut_{(w,F)}(P)\cong P$. Finally, we note that mutation is always invertible~\cite[Lemma~2]{acgk12}: if $Q:=\mut_{(w,F)}(P)$ then $P=\mut_{(-w,F)}(Q)$.

\begin{remark}
Informally, mutation corresponds to the following operation on slices $w_h(P)$ of $P$: at height $h$ one Minkowski adds or ``subtracts'' $\abs{h}$ copies of $F$, depending on the sign of $h$. Definition~\ref{defn:factor} ensures that the concept of Minkowski subtraction makes sense.
\end{remark}

Mutation has a natural description in terms of the dual polytope $\dual{P}$~\cite[Proposition~4 and pg.~12]{acgk12}.

\begin{definition}\label{defn:inner_normal_fan}
The \emph{inner-normal fan} in $\MQ$ of a polytope $F\subset\NQ$ is generated by the cones
\[
\sigma_{v_F}:=\{u\in\MQ\mid u(v_F)=\min{u(v)\mid v\in F}\},\qquad\text{ for each }v_F\in\V{F}.
\]
\end{definition}

\noindent
A mutation $\mut_{(w,F)}$ induces a piecewise-$\GL_d(\Z)$ transformation $\varphi_{(w,F)}$ on $\MQ$ given by
\[
\varphi_{(w,F)}:u\mapsto u-\umin w,\qquad\text{ where }\umin:=\min{u(v_F)\mid v_F\in\V{F}}.
\]
The inner-normal fan of $F$ determines a chamber decomposition of $\MQ$, and $\varphi_{(w,F)}$ acts linearly within each chamber. Let $Q:=\mut_{(w,F)}(P)$. Then $\varphi_{(w,F)}(\dual{P})=\dual{Q}$. It is clear that the Ehrhart quasi-polynomials $L_{\dual{P}}$ and $L_{\dual{Q}}$ for the dual polytopes are equal, since the map $\varphi_{(w,F)}$ is piecewise-linear. Hence mutation gives a systematic way to produce examples of $\GL_d(\Z)$-scissor congruence.

\begin{example}[Mutation]\label{eg:P2_to_P114}
Let $P=\sconv{(1,0),(0,1),(-1,-1)}\subset\NQ$ and $w=(2,-1)\in M$. Then $F=\sconv{(0,0),(-1,-2)}\subset w^\perp$ is a factor. We see that $Q:=\mut_{(w,F)}(P)=\sconv{(1,0),(0,1),(-1,-4)}$.
\begin{center}
\begin{tabular}{rc@{\ }c@{\ }cr}
\raisebox{92px}{$\NQ:$}&
\raisebox{62.4px}{\includegraphics[scale=0.8]{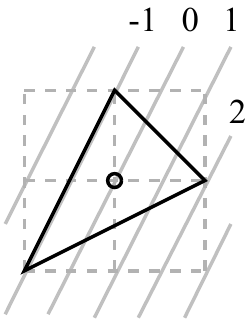}}&
\raisebox{92px}{$\longmapsto$}&
\includegraphics[scale=0.8]{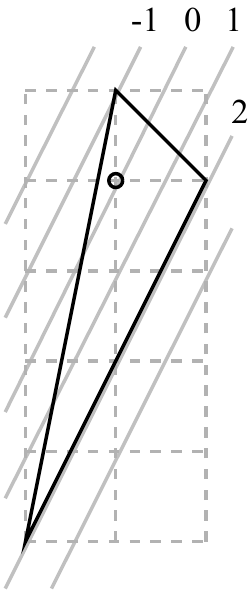}&
\phantom{$\NQ:$}\\
\end{tabular}
\end{center}
On the dual side we have that $\MQ$ is divided into two chambers whose boundary is given by $\Q\cdot w$, and
\[
\varphi_{(w,F)}:(u_1,u_2)\mapsto
\begin{cases}
(u_1,u_2),&\text{ if }u_1+2u_2\leq 0;\\
(3u_1+4u_2,-u_1-u_2),&\text{ otherwise}.
\end{cases}
\]
\begin{center}
\begin{tabular}{rc@{\ }c@{\ }cr}
\raisebox{27px}{$\MQ:$}&
\includegraphics[scale=0.8]{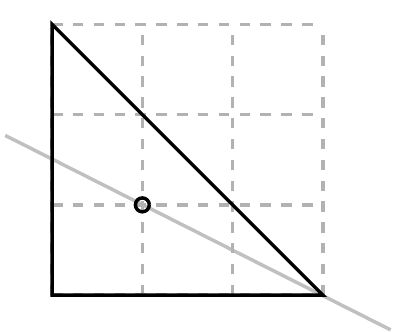}&
\raisebox{27px}{$\longmapsto$}&
\includegraphics[scale=0.8]{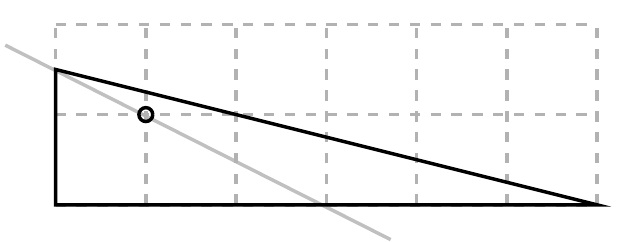}&
\phantom{$\MQ:$}\\
\end{tabular}
\end{center}
Thus we recover Example~\ref{eg:scissor_congruence_dual_P114} from the view-point of mutation.
\end{example}

From here onwards we assume that $P\subset\NQ$ is \emph{Fano}. That is, in addition to  conditions~\eqref{item:fano_max_dim} and~\eqref{item:fano_origin} above, $P$ satisfies:
\begin{enumerate}[label=(\alph*), ref=\alph*, start=3]
\item\label{item:fano_primitive}
the vertices $\V{P}$ of $P$ are primitive lattice points.
\end{enumerate}
The property of being Fano is preserved under mutation~\cite[Proposition~2]{acgk12}. A Fano polytope $P$ corresponds to a toric Fano variety $X_P$ via the \emph{spanning fan} (that is, the fan whose cones are spanned by the faces of $P$). See~\cite{cls} for the theory of toric varieties and~\cite{kn13} for a survey of Fano polytopes. When $P$ is a Fano polygon, $X_P$ corresponds to a toric del~Pezzo surface with at worst log terminal singularities. The \emph{singularity content} of $P$, which we recall in Definition~\ref{def:sing_content} below, is a mutation-invariant of $P$ introduced in~\cite{ak14}. In~\S\ref{subsec:mutation_geometry} we remark briefly on the connection between singularity content and the qG-deformation theory of $X_P$, and how this gives a geometric explanation for the quasi-period collapse of $\dual{P}$.

\subsection{Quotient singularities}\label{subsec:quot_sing}
In order to state the definition of singularity content we first recall some of the theory of quotient or orbifold surface singularities. A cyclic quotient singularity is a surface singularity isomorphic to a quotient  $\A^2/G$, where $G$ is a finite cyclic group acting diagonally on $\A^2$. Assuming that $G$ acts faithfully means that it can be expressed as a subgroup of $\GL_2(\C)$ generated by
\[
\begin{pmatrix}
\varepsilon&\\
&\varepsilon^a
\end{pmatrix}
\]
where $\varepsilon$ is a root of unity and $a\in\Z$. Suppose that $G$ has order $r$; all possible representations are obtained (non-uniquely) by letting $a$ range over $0,\dots,r-1$. If $G$ is generated by the matrix above for $\varepsilon$ a primitive $r$-th root of unity, then denote by $\frac{1}{r}(1,a)$ the singularity $\A^2/G$. As a quotient of affine space by an abelian group, $\frac{1}{r}(1,a)$ is an affine toric variety whose fan we now describe.

Let $N\cong\Z^2$ and $M=\Hom_\Z(N,\Z)$ be the cocharacter and character lattices respectively of an algebraic two-torus $(\C^\times)^2$. A cone $\sigma\subset\NQ$ whose rays are generated by lattice points in $N$ describes an affine toric variety $X_\sigma$. More generally, a collection of cones given by a fan $\Sigma$ describes a non-affine toric variety $X_\Sigma$. The singularity $\frac{1}{r}(1,a)$ is the affine toric variety associated to the cone
\[
\sigma=\scone{e_2,re_1-ae_2}\subset\NQ.
\]
The lattice height of such a cone -- that is, the lattice distance between the origin and the line segment joining the two primitive ray generators of the cone (the \emph{edge} of the cone) -- is called the \emph{local index}, and can be calculated to be
\[
\ell_\sigma=\frac{r}{\gcd{r,a+1}}.
\]
The \emph{width} of the cone is the number of unit-length lattice line segments along the edge of the cone or, equivalently, one less than the number of lattice points along the edge. The width is equal to $\gcd{r,a+1}$. We will often conflate a singularity and its corresponding cone in $\NQ$. An isolated cyclic quotient singularity is a $T$\emph{-singularity} if it is smoothable by a qG-deformation.

\begin{lemma}[{\!\cite[Proposition~3.11]{ksb}}]\label{lem:T-sing}
An isolated cyclic quotient singularity is a $T$-singularity if and only if it takes the form
\[
\frac{1}{dn^2}(1,dnc-1)
\]
for some $c$ with $\gcd{n,c}=1$.
\end{lemma}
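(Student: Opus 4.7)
The plan is to prove both directions using, respectively, an explicit hypersurface construction and the Kollár--Shepherd-Barron reduction to the index-one cover.

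For the ``if'' direction, given $\frac{1}{dn^2}(1, dnc - 1)$ with $\gcd{n, c} = 1$, I would first realise the singularity as $A_{dn-1}/\mu_n$ and then smooth it simultaneously. Let $\mu_n$ act on $A_{dn-1} = \{xy = z^{dn}\} \subset \A^3$ via $(x,y,z) \mapsto (\zeta x, \zeta^{-1} y, \zeta^c z)$ for $\zeta$ a primitive $n$-th root of unity; the equation is preserved because $\zeta^{dnc} = 1$. A short weight computation on the natural cover $\A^2 \to A_{dn-1}$ identifies the combined $\mu_{dn^2}$-action on $\A^2$ as $\frac{1}{dn^2}(1, dnc - 1)$, so $A_{dn-1}/\mu_n$ is the given cyclic quotient. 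Deforming to $\mathcal{V} := \{xy = z^{dn} + t\} \subset \A^4$ with $\mu_n$ acting trivially on $t$, the family $\mathcal{V}/\mu_n \to \A^1_t$ is flat and $\Q$-Gorenstein: the total space is a smooth hypersurface (Gorenstein with trivial $\omega_\mathcal{V}$), $\mu_n$ acts on the generator of $\omega_\mathcal{V}$ through the character $\zeta^c$, and the $n$-th reflexive power therefore descends to an invertible sheaf on the quotient. For $t \neq 0$, $\mathcal{V}_t$ is smooth by the Jacobian criterion, and the $\mu_n$-action is free --- a fixed point would force $x = y = 0$, hence $z^{dn} = -t \neq 0$ with $z \neq 0$, and $\zeta^c z = z$ combined with $\gcd{n,c} = 1$ forces $\zeta = 1$. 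So $\mathcal{V}_t/\mu_n$ is smooth, yielding the required qG-smoothing.

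For the ``only if'' direction, suppose $X = \frac{1}{r}(1,a)$ admits a qG-smoothing. I would invoke the Kollár--Shepherd-Barron correspondence: the smoothing lifts to a $\mu_e$-equivariant smoothing of the Gorenstein index-one cover $\widetilde X$, with $e = r/\gcd{r, a+1}$. Being Gorenstein and smoothable, $\widetilde X$ is a rational double point, and the requirement that the $\mu_e$-action descend to a smooth generic fibre downstairs rules out the $D$ and $E$ cases. Hence $\widetilde X \cong A_{m-1}$ with $m = r/e$, and the equivariant smoothing can be taken to be $\{xy = z^m + t\}$ with $\mu_e$-action $(\zeta x, \zeta^{-1} y, \zeta^c z)$ for some integer $c$. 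Invariance of the defining equation yields $e \mid cm$, and freeness of the action on the general fibre (as analysed in the sufficiency step) requires $\gcd{e, c} = 1$. Setting $n := e$, these together force $n \mid m$, so write $m = dn$ for some integer $d \geq 1$; then $r = mn = dn^2$, and the descent of the action identifies $a = dnc - 1$. The main obstacle is the first step of the ``only if'' direction: the correspondence between qG-deformations of $X$ and $\mu_e$-equivariant deformations of $\widetilde X$, together with the exclusion of $D, E$-type covers, constitutes substantial deformation-theoretic input from \cite{ksb} which I would cite rather than reprove.
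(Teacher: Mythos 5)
The paper does not prove this lemma at all: it is quoted directly from Koll\'ar--Shepherd-Barron~\cite[Proposition~3.11]{ksb}, so there is no internal argument to compare against; what you have written is essentially a reconstruction of the proof in that source. Your sufficiency direction is complete and correct: the identification $\frac{1}{dn^2}(1,dnc-1)\cong A_{dn-1}/\mu_n$ with the action $(x,y,z)\mapsto(\zeta x,\zeta^{-1}y,\zeta^{c}z)$, the one-parameter family $\{xy=z^{dn}+t\}/\mu_n$, and the freeness/$\Q$-Gorenstein checks are the standard argument (one cosmetic point: freeness must be checked for every nontrivial power $\zeta^{j}$, not only the generator, but the same computation handles this since $\gcd{n,c}=1$).

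In the necessity direction there is one step you should not pass over: asserting that the residual $\mu_e$-action on $A_{m-1}$ ``can be taken'' to be $(\zeta x,\zeta^{-1}y,\zeta^{c}z)$ already builds in that the weight of $xy$ is zero, equivalently $e\mid cm$ --- which is essentially the divisibility you are trying to establish, so deducing $e\mid cm$ afterwards from invariance of the defining equation is circular as written. The normalization genuinely fails in general: for $\frac{1}{6}(1,1)$ the index-one cover is $A_1$ and the residual $\mu_3$ acts with weights $(1,1,1)$ on $(x,y,z)$, not $(1,-1,c)$; correspondingly this singularity is not of class $T$. The honest route is to write the action with general weights $(\zeta^{a_1}x,\zeta^{a_2}y,\zeta^{a_3}z)$, note $\gcd{e,a_3}=1$ from faithfulness on $\omega$, and extract $e\mid m a_3$ (hence $e\mid m$) from the requirement that the $\mu_e$-equivariant part of the versal deformation of $A_{m-1}$, over a base with trivial action, admit a nonzero constant term --- otherwise every equivariant fibre passes through the origin, a fixed point at which either the fibre or its quotient is singular. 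This weight analysis is precisely the content of~\cite{ksb}, which you say you would cite for the deformation-theoretic core, so the lemma is not in danger; but the citation has to carry this specific step, not merely the correspondence between qG-deformations and equivariant deformations of the index-one cover.
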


The cone $\sigma\subset\NQ$ associated to a $T$-singularity $\frac{1}{dn^2}(1,dnc-1)$ has local index $\ell=n$ and width $dn$; it is easily seen that $T$-singularities are characterised by having the width divisible by the local index. Suppose that $P\subset\NQ$ is a Fano polygon with edge $E$ spanning $\sigma$. Let $w\in M$ be the primitive inner-normal such that $w(E)=-\ell$, and choose $F\subset w^\perp$ of lattice length $d$. The mutation $\mut_{(w,F)}(P)$ collapses the edge $E$ to a vertex, removing the cone $\sigma$. This is equivalent to a local qG-smoothing of the $T$-singularity.

\begin{example}
Consider the polytope $Q:=\sconv{(1,0),(0,1),(-1,-4)}$ appearing in Example~\ref{eg:P2_to_P114}. The corresponding spanning fan has three two-dimensional cones, two of which are smooth and one of which, $\scone{(1,0),(-1,-4)}$, corresponds to a $\frac{1}{4}(1,1)$ $T$-singularity.
\end{example}

The other relevant class of quotient singularities are the \emph{$R$-singularities} introduced in~\cite{ak14}.

\begin{definition}
A cyclic quotient singularity of local index $\ell$ and width $k$ is an \emph{$R$-singularity} if $k<\ell$.
\end{definition}

Let $\sigma\subset\NQ$ be a cone of local index $\ell$ and width $k$. Write $k=d\ell+r$, where $d,r\in\Z_{\geq 0}$, $0\leq r<\ell$. If $r=0$ then $\sigma$ is a $T$-singularity. Assume that $r\neq 0$ and, as before, suppose that $P\subset\NQ$ is a Fano polygon with edge $E$ spanning $\sigma$. Let $w\in M$ be the corresponding inner-normal, and pick $F\subset w^\perp$ of lattice length $d$. The mutation $\mut_{(w,F)}(P)$ transforms $\sigma$ to a cone $\tau$ of width $r$ corresponding to a $\frac{1}{r\ell}(1,rc/k-1)$ singularity. Crucially, $\tau$ has width strictly less than the local index, and so cannot be simplified via further mutation. This is equivalent to a partial qG-smoothing of the original singularity $\sigma$, resulting in a singularity $\tau$ that is rigid under qG-deformation. The $R$-singularity $\tau$ is independent of the choices made~\cite[Proposition~2.4]{ak14}.

\begin{definition}
Let $\sigma\subset\NQ$ be a cone corresponding to a $\frac{1}{r}(1,c-1)$ singularity. Let $\ell$ be the local index and let $k$ be the width of the cone. Write $k=d\ell+r$, where $d,r\in\Z_{\geq 0}$, $0\leq r<\ell$. The \emph{residue} of $\sigma$ is
\[
\res{\sigma}=\begin{cases}
\frac{1}{r\ell}(1,rc/k-1),&\text{ if }r\neq 0;\\
\emptyset,&\text{ otherwise.}
\end{cases}
\]
The \emph{singularity content} of $\sigma$ is the pair $(d,\res{\sigma})$. The singularity content is local qG-deformation-theoretic data about $\sigma$.
\end{definition}

\begin{definition}\label{def:sing_content}
Let $P\subset\NQ$ be a Fano polygon with cones $\sigma_1,\ldots,\sigma_n$. The \emph{basket} of $P$ is the multiset
\[
\cB:=\{\res{\sigma_i}\mid 1\leq i\leq n\},
\]
where the empty residues are omitted\footnote{In~\cite{ak14} the basket is cyclically ordered. Although important from the viewpoint of classification, it is not required here.}. The \emph{singularity content} of $P$ is the pair
\[
(d_1+\cdots+d_n,\cB),
\]
where the $d_i$ are the integers appearing in the singularity content of the $\sigma_i$. Singularity content is a qG-deformation-invariant of $X_P$. 
\end{definition}

\subsection{Hilbert series}
Any projective toric variety $X_P$ arising from a polytope $P$ comes with a natural ample divisor $D$ given by its toric boundary $D=X_P\setminus\T$, where $\T$ is the big torus inside $X_P$. When $P$ is Fano, $D=-K$, the anti-canonical divisor on $X_P$. In this case, due to the standard toric dictionary allowing one to move between lattice points in $M$ and sections of line bundles on $X_P$, one has that the Hilbert function of $(X_P,-K)$ equals the Ehrhart quasi-polynomial $L_{\dual{P}}(k)$ of the rational polytope $\dual{P}$. Hence the generating function $\Hilb_{(X_P,-K)}(t)$ for the Hilbert function of $(X_P,-K)$ -- the \emph{Hilbert series} of $(X_P,-K)$ -- is equal to the Ehrhart series of $\dual{P}$. From here onwards we supress $-K$ from the notation.

The Hilbert series of an orbifold del~Pezzo surface $X$ with basket $\cB$ can be written in the form~\cite[Corollary~3.5]{ak14}:
\[
\Hilb_X(t)=\frac{1+(K^2-2)t+t^2}{(1-t)^3}+\sum_{\sigma\in\cB}Q_\sigma,
\]
where $Q_\sigma$ are \emph{orbifold correction terms} given by certain rational functions with denominators $1-t^{\ell_\sigma}$. For example, the orbifold correction term for the $R$-singularity $\frac{1}{3}(1,1)$ is
\[
Q_{\frac{1}{3}(1,1)}=\frac{-t}{3(1-t^3)}=-\frac{1}{3}(t+t^4+t^7+\dots)
\]
which contributes $-1/3$ to the coefficient of $t^d$ when $d\equiv 1\modb{3}\ $.

The Hilbert function is a quasi-polynomial when $X$ is an orbifold (because the anti-canonical divisor is $\Q$-Cartier rather than Cartier). The anti-canonical divisor does not correspond to a line bundle, but some integer multiple of it does. The smallest integer $d$ such that $-dK$ is Cartier is called the \emph{Gorenstein index} of $X$ and denoted by $\ell_X$. In the toric setting, $-dK$ is Cartier if and only if $d\dual{P}$ is a lattice polytope. Hence the Gorenstein index $\ell_{X_P}$ of $X_P$ equals the denominator $r_{\dual{P}}$ of $\dual{P}$.

\subsection{Algebraic geometry and the quasi-period}\label{subsec:mutation_geometry}
Mutations were introduced in~\cite{acgk12} as part of an ongoing program investigating Mirror Symmetry for Fano manifolds~\cite{ProcECM}. In two dimensions the picture is very well understood: see~\cite{Pragmatic} for the details. In summary, if two Fano polygons $P$ and $Q\subset\NQ$ are related by a sequence of mutations then there exists a qG-deformation between the corresponding toric del~Pezzo surfaces $X_P$ and $X_Q$. Such a qG-deformation preserves the anti-canonical Hilbert series, hence $L_{\dual{P}}=L_{\dual{Q}}$ and so the quasi-periods of $\dual{P}$ and $\dual{Q}$ agree. However it does not in general preserve the Gorenstein index, and hence the denominators $r_{\dual{P}}$ and $r_{\dual{Q}}$ need not be equal. The cones over the edges of $P$ correspond to the singularities of $X_P$, and these admit partial qG-smoothings to the qG-rigid singularities given by the basket $\cB$ of residues.

Suppose that the singularity content of $P$ is $(d,\cB)$. Then, by the absence of global obstructions to qG-deformations on Fano varieties, $X_P$ is qG-deformation-equivalent to a (\emph{not necessarily toric}) del~Pezzo surface $X$ with singularities $\cB$ and whose non-singular locus has topological Euler number $d$. Since $\Hilb_{X_P}(t)=\Hilb_X(t)$, we have an explanation for quasi-period collapse of the dual polytope $\dual{P}$. Specifically, the Gorenstein index of $X$ is equal to the quasi-period of $\dual{P}$.

\section{Studying quasi-period collapse}
The Hilbert series of orbifold del~Pezzo surfaces were studied in~\cite{w18} with the aim of describing the structure of the set of possible baskets $\cB$ of $R$-singularities on orbifold del~Pezzo surfaces with a fixed Hilbert series. This is achieved by partitioning $\cB$ into two pieces: a \emph{reduced basket} and an \emph{invisible basket}. The latter, along with the $T$-singularities, is not detectable by the Hilbert series, and from our viewpoint it is this invisibility that causes quasi-period collapse.

\begin{definition}
A collection $\sigma_1,\ldots,\sigma_n$ of $R$-singularities is a \emph{cancelling tuple} if
\[
Q_{\sigma_1}+\cdots+Q_{\sigma_n}=0.
\]
A collection of $R$-singularities is called \emph{invisible} if it is a union of cancelling tuples.
\end{definition}

\begin{example}
Let $\sigma$ be an $R$-singularity of local index $\ell$ and width $k$. Then there exists an $R$-singularity $\sigma'$ of local index $\ell $ and width $\ell-k$ such that $Q_\sigma+Q_{\sigma'}=0$. Combinatorially, this is understood by the observation that the union of the two cones gives a $T$-singularity.
\begin{center}
\vspace{0.7em}
\includegraphics[scale=0.5]{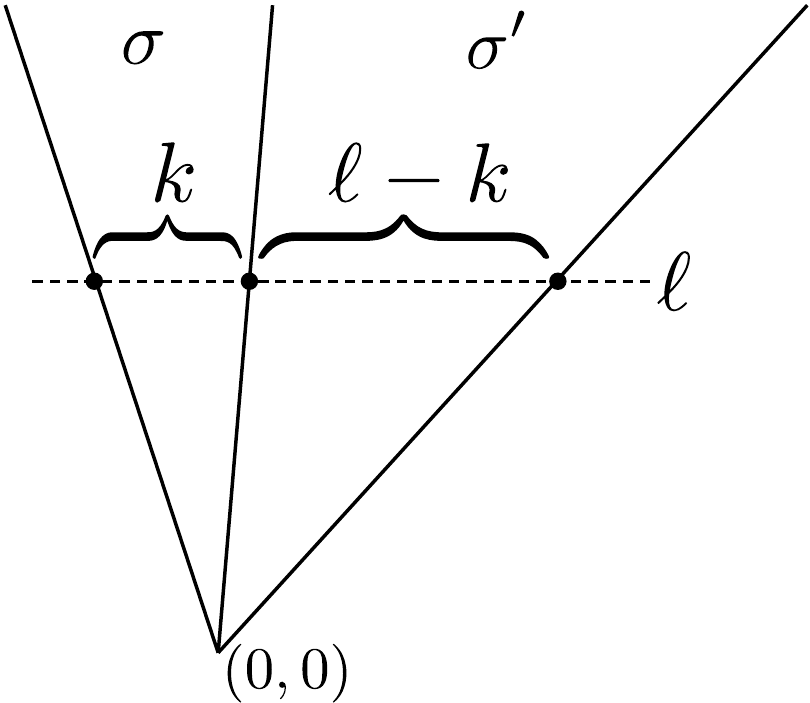}
\end{center}
\end{example}

\begin{definition}
Let $X$ be an orbifold del~Pezzo surface. A maximal invisible subcollection of the basket $\cB$ of $X$ is called an \emph{invisible basket} for $X$. Notice that such a maximal subcollection is not unique, since singularities can appear in many different cancelling tuples. Given a choice of invisible basket $\cIB\subset\cB$, the complement $\cRB=\cB\setminus\cIB$ is called the \emph{reduced basket} for $X$ corresponding to the choice of $\cIB$.
\end{definition}

Let $P\subset\NQ$ be a Fano polygon with singularity content $(d,\cB)$. Let $\cIB$ be an invisible basket of $\cB$ with corresponding reduced basket $\cRB$. Hence $\cB=\cRB\amalg\cIB$. Denote the collection of $T$-singularities on $X_P$ by $\cT$ (so $\abs{\cT}=d$).

\begin{thm}\label{thm:measuring_qp}
Let $P\subset\NQ$ be a Fano polygon. The quasi-period of $\dual{P}$ is given by\footnote{We adopt the convention that $\lcm{\emptyset}=1$.}
\[
\pi_{\dual{P}}=\lcm{\ell_\sigma\mid \sigma\in\cRB}.
\]
Furthermore, $\dual{P}$ exhibits quasi-period collapse if and only if there exists some $\tau\in\cIB\cup\cT$ of local index not dividing $\lcm{\ell_\sigma\mid \sigma\in\cRB}$. Moreover, the quasi-period collapse is measured by $\cIB$:
\[
r_{\dual{P}}=\blcm{\{\pi_{\dual{P}}\}\cup\{\ell_\sigma\mid\sigma\in\cIB\cup\cT\}}.
\]
\end{thm}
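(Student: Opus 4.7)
The plan is to read the Ehrhart data for $\dual{P}$ off the Hilbert series of $X_P$ via the identification $\Hilb_{X_P}(t) = \Ehr_{\dual{P}}(t)$. Substituting $\cB = \cRB \amalg \cIB$ into the Hilbert series decomposition from~\cite[Corollary~3.5]{ak14} and using $\sum_{\sigma \in \cIB} Q_\sigma = 0$, which holds because $\cIB$ is a union of cancelling tuples, the Hilbert series reduces to
\[
\Hilb_{X_P}(t) = \frac{1+(K^2-2)t+t^2}{(1-t)^3} + \sum_{\sigma \in \cRB} Q_\sigma.
\]
Both $\pi_{\dual{P}}$ and $r_{\dual{P}}$ will be extracted from this expression.

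For the quasi-period I would establish $\pi_{\dual{P}} = L$, where $L := \lcm{\ell_\sigma \mid \sigma \in \cRB}$, by two-sided divisibility. The bound $\pi_{\dual{P}} \mid L$ is immediate: the polynomial prefactor has period one, and each $Q_\sigma$ has denominator $1 - t^{\ell_\sigma}$, so the common denominator of the sum divides $1 - t^L$. For the reverse direction, suppose for contradiction that $\pi_{\dual{P}}$ is a proper divisor of $L$, and pick $d \mid L$ with $d \nmid \pi_{\dual{P}}$. Then the contribution of $\sum_{\sigma \in \cRB} Q_\sigma$ to primitive $d$-th roots of unity vanishes, forcing the sub-collection $\{\sigma \in \cRB : d \mid \ell_\sigma\}$ to contain a cancelling tuple --- contradicting the maximality of the invisible basket $\cIB$.

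For the denominator, $r_{\dual{P}}$ equals the Gorenstein index $\ell_{X_P}$, which is $\lcm{\ell_{\sigma_i}}$ taken over the cones $\sigma_i$ of $P$. Each cone $\sigma_i$ shares its local index with both its residue $\res{\sigma_i} \in \cB$ (when the residue is non-empty) and each of the $d_i$ unit $T$-singularities in $\cT$ into which it qG-decomposes (when $d_i > 0$); conversely every element of $\cB \cup \cT$ arises this way. Hence $r_{\dual{P}} = \lcm{\ell_\sigma \mid \sigma \in \cRB \cup \cIB \cup \cT}$, and substituting $\pi_{\dual{P}} = L$ yields the claimed formula
\[
r_{\dual{P}} = \blcm{\{\pi_{\dual{P}}\} \cup \{\ell_\sigma \mid \sigma \in \cIB \cup \cT\}}.
\]
The collapse criterion is now formal: $\pi_{\dual{P}} < r_{\dual{P}}$ iff adjoining the new local indices strictly enlarges the LCM, iff some $\ell_\tau$ with $\tau \in \cIB \cup \cT$ fails to divide $\pi_{\dual{P}}$.

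The main obstacle is the lower bound in the quasi-period argument: the definition of \emph{cancelling tuple} demands that $\sum Q_\sigma = 0$ on the nose, which is strictly stronger than vanishing of the combined contribution to a single primitive root of unity. Reducing the period-shrinking hypothesis to a genuine cancelling tuple will require exploiting the explicit form of $Q_\sigma$ developed in~\cite{w18} --- in particular the fact that the numerator of $Q_\sigma$ is concentrated at a single residue class modulo $\ell_\sigma$, so that cyclotomic-component vanishing already forces total vanishing.
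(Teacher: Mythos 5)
Your proposal follows the paper's proof in all essentials: identify $\Ehr_{\dual{P}}(t)=\Hilb_{X_P}(t)$, cancel the invisible basket so only $\sum_{\sigma\in\cRB}Q_\sigma$ survives, read off $\pi_{\dual{P}}$ from the periods $\ell_\sigma$ of these surviving correction terms, and compute $r_{\dual{P}}$ as the Gorenstein index $\lcm{\ell_\sigma\mid\sigma\in\cB\cup\cT}$, from which the collapse criterion is formal. The one step you single out as the ``main obstacle'' --- that a drop in the period of $\sum_{\sigma\in\cRB}Q_\sigma$ would force a genuine cancelling tuple inside $\cRB$, contradicting maximality of $\cIB$ --- is exactly the point the paper also does not argue internally but delegates to~\cite{w18} (see the remark following the theorem), so you are at the same level of rigor there; be aware, however, that your proposed way of closing it is not quite right: the numerator of $Q_\sigma$ is in general supported on several residue classes modulo $\ell_\sigma$ (it is a single monomial only in small cases such as $\frac{1}{3}(1,1)$), so the required input is the classification/independence analysis of cancelling tuples in~\cite[\S4]{w18} rather than single-residue concentration.
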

\begin{proof} We have
\[
\Ehr_{\dual{P}}(t)=\Hilb_{X_P}(t)=\text{initial term}+\sum_{\sigma\in\cB}{Q_\sigma}=\text{initial term}+\sum_{\sigma\in\cRB}{Q_\sigma}.
\]
As discussed, each orbifold correction term $Q_\sigma$ contributes to the coefficients of this series as a quasi-polynomial with quasi-period $\ell_\sigma$. When $\sigma\in\cRB$ these terms are not cancelled and so make non-zero contributions to the coefficients of the Ehrhart series, hence its quasi-period is given by:
\[
\pi_{\dual{P}}=\lcm{\ell_\sigma\mid\sigma\in\cRB}.
\]
The Gorenstein index of $P$ is equal to $\ell_{X_P}=\lcm{\ell_\sigma\mid\sigma\in\cB\cup\cT}$. Hence
\[
r_{\dual{P}}=\ell_{X_P}=\lcm{\ell_\sigma\mid\sigma\in\cRB\cup\cIB\cup\cT}=\blcm{\{\pi_{\dual{P}}\}\cup\{\ell_\sigma\mid\sigma\in\cIB\cup\cT\}}.
\]
This is distinct from $\pi_{\dual{P}}$ if and only if $\lcm{\ell_\sigma\mid\sigma\in\cIB\cup\cT}$ does not divide $\pi_{\dual{P}}$.
\end{proof}

\begin{remark}
It follows from~\cite[\S4]{w18} that the choice of $\cIB$ is irrelevant in the statement of Theorem~\ref{thm:measuring_qp}.
\end{remark}

As a corollary to Theorem~\ref{thm:measuring_qp} we immediately obtain:

\begin{cor}\label{cor:disc}
Let $P\subset\NQ$ be a Fano polygon. The discrepancy between the quasi-period and denominator of $\dual{P}$ is
\[
\frac{r_{\dual{P}}}{\pi_{\dual{P}}}=\frac{\lcm{\ell_\sigma\mid\sigma\in\cIB\cup\cT}}{\gcd{\lcm{\ell_\sigma\mid\sigma\in\cRB},\lcm{\ell_\sigma\mid\sigma\in\cIB\cup\cT}}}.
\]
\end{cor}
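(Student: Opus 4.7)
The plan is to derive Corollary~\ref{cor:disc} as an immediate algebraic consequence of Theorem~\ref{thm:measuring_qp}, using only the classical identity $\blcm{A,B}\cdot\gcd{A,B}=AB$ for positive integers $A,B$.

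First I would introduce the shorthand $A:=\lcm{\ell_\sigma\mid\sigma\in\cRB}$ and $B:=\lcm{\ell_\sigma\mid\sigma\in\cIB\cup\cT}$, so that Theorem~\ref{thm:measuring_qp} translates directly to $\pi_{\dual{P}}=A$ and $r_{\dual{P}}=\blcm{A,B}$; the convention $\lcm{\emptyset}=1$ handles any edge case where one of the sets of local indices is empty. Taking the ratio gives $r_{\dual{P}}/\pi_{\dual{P}}=\blcm{A,B}/A$, at which point I would invoke $\blcm{A,B}\cdot\gcd{A,B}=AB$ and divide through by $A\cdot\gcd{A,B}$ to obtain $\blcm{A,B}/A=B/\gcd{A,B}$. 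Substituting back the definitions of $A$ and $B$ yields precisely the expression in the statement.

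There is no genuine obstacle here: all of the substantive content -- identifying the quasi-period with the lcm of the local indices in the reduced basket, and the denominator with the Gorenstein index of $X_P$ -- is already absorbed into Theorem~\ref{thm:measuring_qp}. The corollary simply repackages the discrepancy as a single fraction in which the contribution of the invisible basket and the $T$-singularities is manifestly isolated from that of the reduced basket, making transparent the extent to which $\cIB\cup\cT$ drives the gap between $\pi_{\dual{P}}$ and $r_{\dual{P}}$.
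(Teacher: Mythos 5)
Your derivation is correct and is exactly the intended one: the paper gives no separate proof, stating that the corollary follows immediately from Theorem~\ref{thm:measuring_qp}, and your argument (setting $A=\lcm{\ell_\sigma\mid\sigma\in\cRB}$, $B=\lcm{\ell_\sigma\mid\sigma\in\cIB\cup\cT}$ and using $\blcm{A,B}\gcd{A,B}=AB$) is the natural way to make that immediacy explicit. Nothing is missing.
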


\begin{example}[Detecting quasi-period collapse]
Consider the polytope $Q:=\sconv{(1,0),(0,1),(-1,-4)}$ appearing in Example~\ref{eg:P2_to_P114}. This has singularity content $(3,\emptyset)$, and $\cT=\{2\times\text{smooth},\frac{1}{4}(1,1)\}$. Applying Corollary~\ref{cor:disc} we have that $r_{\dual{Q}}=2\pi_{\dual{Q}}$.
\end{example}

We now give an example of an infinite family of Fano triangles, obtained via mutation, where the denominator $r_{\dual{P}}$ can become arbitrarily large but where $\pi_{\dual{P}}=1$. Let $P\subset\NQ$ be a Fano triangle. Recall that the corresponding toric variety $X_P$ is a \emph{fake weighted projective plane}~\cite{Kas09}: a quotient of a weighted projective plane by a finite group $N/N'$ acting free in codimension one, where $N'$ is the sublattice generated by the vertices of $P$.

\begin{example}[Mutations of $\Proj^2$]\label{eg:arbitrarily_large_denominator}
In~\cite{hp,ak16} the graph of mutations of $\Proj^2$ is constructed. The vertices of this graph are given by $\Proj(a^2,b^2,c^2)$, where $(a,b,c)\in\Z_{>0}^3$ is a \emph{Markov triple} satisfying
\begin{equation}\label{eq:markov}
a^2+b^2+c^2=3abc.
\end{equation}
Let $X_P=\Proj(a^2,b^2,c^2)$ be such a weighted projective plane, with $P\subset\NQ$ the corresponding Fano triangle. Since $X_P$ is qG-deformation-equivalent to $\Proj^2$, so $X_P$ is smoothable and its anti-canonical Hilbert function has quasi-period one. Hence $\pi_{\dual{P}}=1$. However, the denominator $r_{\dual{P}}$ of $\dual{P}$ can be arbitrarily large. To see this, note first that $a,b,c$ must be pairwise coprime: if $p\mid a$ and $p\mid b$ then $p^2\mid 3abc=a^2+b^2+c^2$, and hence $p\mid c$; but then $p$ appears as a square on the left-hand side and as a cube on the right-hand side of~\eqref{eq:markov}. Let $\ol{b}$ be an inverse of $b\!\!\modb{a^2}$. Note that $c^2\ol{b}^2+1\equiv(3abc-b^2)\ol{b}^2+1\equiv3a\ol{b}c\modb{a^2}$, and so the singularity $\frac{1}{a^2}(b^2,c^2)$ on $X_P$ has local index
\[
\frac{a^2}{\gcd{a^2,c^2\ol{b}^2+1}}=\begin{cases}
a,&\text{ if }a\not\equiv0\modb{3}; \\
a/3,&\text{ if }a\equiv0\modb{3}.
\end{cases}
\]
Considering equation~\eqref{eq:markov} $\!\!\modb{3}$ shows that no Markov numbers are divisible by three. Hence the three local indices on $X_P$ are $a$, $b$, and $c$, and so $r_{\dual{P}}=abc$. The two triangles $P$ and $Q$ in Example~\ref{eg:P2_to_P114} are the simplest examples, arising from the Markov triples $(1,1,1)$ and $(1,1,2)$ respectively, and corresponding to $\Proj^2$ and $\Proj(1,1,4)$.
\end{example}

\begin{remark}
There exist Fano triangles of quasi-period one not arising from the construction in Example~\ref{eg:arbitrarily_large_denominator}. For example, consider
\[
P=\sconv{(3,2),(-1,2),(-1,-2)}\subset\NQ.
\]
The corresponding fake weighted projective plane $X_P=\Proj(1,1,2)/(\Z/4)$ has $2\times\frac{1}{4}(1,3)$ and $\frac{1}{8}(1,3)$ $T$-singularities. We see that $\dual{P}$ has $r_{\dual{P}}=2$ and $\pi_{\dual{P}}=1$. In fact $X_P$ is qG-smoothable to the nonsingular del~Pezzo surface of degree two, and hence $L_{\dual{P}}(k)=k^2+k+1$.
\end{remark}

\subsection*{Acknowledgments}
Our thanks to Tom Coates, Alessio Corti, Mohammad Akhtar, Thomas Prince, and Miles Reid for many helpful discussions. AK is supported by EPSRC Fellowship~EP/N022513/1. BW was supported by a grant from the London Mathematical Society.

\bibliographystyle{plain}
\bibliography{bibliography}
\end{document}